\newtheorem{thm}{Theorem}[section]
\newtheorem{lem}[thm]{Lemma}
\newtheorem{prop}[thm]{Proposition}
\newtheorem{cor}[thm]{Corollary} 
\newtheorem{rem}[thm]{Remark}
\def\trans{ \textsf{T}}
\def\rr{\mathbb{R}}
\def\cc{\mathbb{C}}
\def\zz{\mathbb{Z}}
\def\dd{\mathbb{D}}
\def\d{\frak{d}}
\def\i{\sqrt{-1}}
\DeclareMathOperator{\Si}{Sign}
\DeclareMathOperator{\mult}{mult}
\DeclareMathOperator{\ju}{jump}
\def\ooplus{{\mathlarger{\mathlarger{\mathlarger{\oplus}}}}}
\begin{document}

\title[Signature Jumps and Alexander Polynomials]{Signature Jumps and Alexander Polynomials for Links}

\author{Patrick M. Gilmer}
\address{Department of Mathematics\\
Louisiana State University\\
Baton Rouge, LA 70803\\
USA}
\email{gilmer@math.lsu.edu}
\urladdr{www.math.lsu.edu/\textasciitilde gilmer/}

\author{Charles Livingston}
\address{Department of Mathematics\\
Indiana University\\  
Bloomington, IN 47405\\ U.S.A.}
\email{livingst@indiana.edu}
\urladdr{mypage.iu.edu/~livingst/}

\thanks{The first author was partially supported by  NSF-DMS--1311911.  The second author was partially supported by a grant from the Simons Foundation and by NSF-DMS--1505586. }

\subjclass[2010]{57M25}
\keywords{link signature}

\begin{abstract} 
We relate the jumps of the signature function of a link to the roots of its first non-zero higher Alexander polynomial that lie on the unit circle.
\end{abstract}

\maketitle 
 
 
 \section{Introduction} \label{sec.intro}
 
 A well-known result states that for a knot $K \subset S^3$, the 
 absolute value of the
 classical Murasugi signature, $\sigma_K$, is bounded above by the number of roots of the
 Alexander polynomial, $\Delta_K(t)$, on the unit circle, counted with multiplicity.  This is most easily proved using Matsumoto's result~\cite{matumoto} that the signature is   the 
 sum of Milnor signatures~\cite[\S 5]{milnor} of the knot at points on the upper half circle. The Milnor signature is the signature of a symmetric bilinear form on a space of dimension given by twice the multiplicity of the root.  A similar argument yields a generalization.\vskip.05in
 
 \noindent{\bf Theorem.} {\it For any $\omega \in S^1 \subset \cc$, if $\Delta_K(\omega) \ne 0$, then the 
 absolute value of the Levine-Tristram signature,    $\sigma_K (\omega)$, is bounded above by  the number of roots (counted with multiplicity) of the Alexander polynomial at points on the unit circle with real parts greater than that of $\omega$.}
 
 \vskip.05in
 
Generalizing this to links is nontrivial.  The Alexander polynomial of a link $L$ can be identically zero, and if nonzero, it can have roots at $-1$, which is problematic since $\sigma_L = \sigma_L(-1)$.  Also, following some of the approaches that work in the case of knots leads to subtle technical points that require lengthy arguments to overcome.   Our purpose here is to present an  approach to a generalization based on simultaneous row and column operations.
 
\vskip.05in 
  
\noindent {\it Acknowledgments}  We wish to thank Alexander Stoimenow for early discussions related to this material.   We are especially grateful to the referee for comments 
 that significantly improved the paper.
 
  
\section{Statements of results}\label{sec.results}

Let $L$ be an oriented link in $S^3$, let $\mu_L$ be the number of components of $L$, and let $\Delta_L$ denote the Alexander polynomial of
 $L$.  One also has higher Alexander polynomials, $\Delta_i(L)$, where $\Delta_L=\Delta_1(L)$. The polynomial $\Delta_i(L)$ can be defined  as the greatest common divisor of the set of all $(2g+\mu_L-i)$--minors of $tV-V^\trans$, where $V$ is a Seifert matrix for $L$ associated to a connected  Seifert surface of genus $g$; these are well-defined up to multiplication by $\pm t^{k}$ for some $k$.   Let $A_L \in \zz[t,t^{-1}]$ be the first of these higher Alexander polynomials of $L$ which is nonzero.  By convention, a $0$--minor is taken to be $1$.  Thus every link $L$ has $\Delta_{i}(L)= 1$ for some $i$.   If $\Delta_L \ne 0$, then $A_L =\Delta_L$.

If $V$ is a Seifert matrix for $L$ coming from a connected Seifert surface, we consider 
$$W(t) = (1-t) V + (1- t^{-1} )V^\trans.$$   
Define the Levine-Tristram signature function~\cite{levine, tristram} on $S^1 \subset \cc$ by  $\sigma_L(\omega) = \Si(W(\omega))$, where  $\omega \in S^1 \subset \cc$. 
Note that one always has $\sigma_L(1)=0.$ The  Murasugi signature~\cite{murasugi} is denoted $\sigma_L$ and is defined to be $\sigma_L(-1)$.
We also consider one-sided limits, $\sigma_L^\pm(e^{2 \pi i x})= \lim_{y \to x^\pm} \sigma_L(e^{2 \pi i y})$, which in turn can be used to define one-sided jumps: $$\ju^\pm(\omega)= \pm( \sigma_L^\pm(\omega)- \sigma_L(\omega)).$$
Thus, $\ju^-(\omega)$ can be thought of as the jump in signature when arriving at $\omega$ while traveling counterclockwise around the circle. Similarly,
$\ju^+(\omega)$ should be thought of as the jump in signature when departing from  $\omega$.
We let $\mult_\omega(f(t))$ denote the multiplicity of $\omega$ as a root of 
 a polynomial 
  $f(t).$
  
\begin{thm}\label{main}
The  signature function $\sigma_L(\omega)$ is a step function on $S^1$ which can have   discontinuities only at  roots of $(t-1) A_L(t)$.
If $\omega \ne \pm 1$, then $|\ju^\pm(\omega)| \le \mult_\omega (A_L)$.  In addition,  $\mult_{-1}(A_L)$ is even,   $|\ju^\pm(-1)| \le (1/2)\mult_{-1} (A_L)$, and  $|\ju^\pm(1)| \le \mu_L -1$.
\end{thm}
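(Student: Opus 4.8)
\emph{Proof plan.} Write $B(t)=tV-V^{\trans}$ and $c(t)=t^{-1}(1-t)$, so that $W(t)=c(t)B(t)$; one has $W(t^{-1})^{\trans}=W(t)$, i.e.\ $W$ is Hermitian for the involution $t\mapsto t^{-1}$ on $\qq[t,t^{-1}]$, and $\Si\,W(\omega)=\sigma_{L}(\omega)$ on $S^{1}$. Put $n=2g+\mu_{L}-1$. Since $A_{L}$ is the first nonzero $\Delta_{i}(L)$, the $\qq(t)$--rank $r$ of $B$ satisfies $n-r=i-1$, and $A_{L}$ is a greatest common divisor of the $r\times r$ minors of $B$. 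The first step is to split off a nonsingular piece by simultaneous row and column operations. The submodule $N=\ker_{\qq(t)}W\cap\qq[t,t^{-1}]^{n}$ is saturated (if $fx\in N$ with $f\neq 0$, then $Wx=0$ over $\qq(t)$, so $x\in N$), hence, $\qq[t,t^{-1}]$ being a PID, $N$ is a direct summand of rank $n-r$; extending a basis of a complement of $N$ by a basis of $N$ gives $P\in GL_{n}(\qq[t,t^{-1}])$ with $P(t^{-1})^{\trans}W(t)P(t)=W'(t)\oplus 0_{\,n-r}$, where $W'$ is $r\times r$ and again Hermitian for $t\mapsto t^{-1}$. As $\det P$ is a unit, $\sigma_{L}(\omega)=\Si\,W'(\omega)$ for all $\omega\in S^{1}$, and comparing ideals of $r\times r$ minors (Cauchy--Binet, in both directions) gives $\det W'\doteq c^{\,r}A_{L}$. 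In particular $\det W'\neq 0$, so $\Si\,W'$, hence $\sigma_{L}$, is locally constant off the finite zero set of $c^{\,r}A_{L}$, which lies in $\{1\}\cup\{\text{roots of }A_{L}\}=\{\text{roots of }(t-1)A_{L}\}$; this is the step-function assertion.

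For $\omega_{0}\neq\pm 1$ the plan is to localize $\qq[t,t^{-1}]$ at the minimal polynomial of $\omega_{0}$ and put $W'$ into Smith form over that discrete valuation ring, with diagonal entries $d_{1},\dots,d_{r}$. Then $\dim\ker W'(\omega_{0})=\#\{j: d_{j}(\omega_{0})=0\}\le\sum_{j}\mult_{\omega_{0}}(d_{j})=\mult_{\omega_{0}}(\det W')$, and since $c(\omega_{0})\neq 0$ this equals $\mult_{\omega_{0}}(A_{L})$. A one-sided jump of the signature along a continuous path of Hermitian matrices is at most the dimension of the kernel at that point, so $|\ju^{\pm}(\omega_{0})|\le\dim\ker W'(\omega_{0})\le\mult_{\omega_{0}}(A_{L})$.

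At $\omega_{0}=-1$ two extra facts are needed. For evenness: $W'(t^{-1})^{\trans}=W'(t)$ forces $\det W'(t^{-1})=\det W'(t)$, so writing $\det W'=(t+1)^{m}g(t)$ with $g(-1)\neq 0$ one gets $g(t^{-1})=t^{m}g(t)$, which at $t=-1$ yields $(-1)^{m}=1$; as $c(-1)\neq 0$, $\mult_{-1}(A_{L})=\mult_{-1}(\det W')=m$ is even. For the jump bound: $W(\overline\omega)=\overline{W(\omega)}$ gives $\sigma_{L}(\overline\omega)=\sigma_{L}(\omega)$, so near $-1$ the spectrum of $W'(e^{i\theta})$ is invariant under reflection in $\theta=\pi$; hence $\sigma_{L}^{+}(-1)=\sigma_{L}^{-}(-1)$, so $\ju^{-}(-1)=-\ju^{+}(-1)$, and, taking analytic (Rellich) eigenvalue functions $\lambda_{j}(e^{i\theta})$ of $W'(e^{i\theta})$ near $\theta=\pi$, the reflection sorts them into mirror pairs, some pairs being a single fixed function. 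If the functions of a pair vanish at $\theta=\pi$, their combined vanishing order is at least $2$: a fixed function has even order $\ge 2$, while a genuine pair of order $o$ has leading coefficients differing by the factor $(-1)^{o}$, so it contributes $0$ to $\ju^{+}(-1)$ when $o$ is odd and $\pm 2$ (with combined order $2o\ge 4$) when $o$ is even. In every case the pair's contribution to $|\ju^{+}(-1)|$ is at most half its contribution to $\mult_{-1}(\det W')=\mult_{-1}(A_{L})$, so summing gives $|\ju^{\pm}(-1)|\le\tfrac12\mult_{-1}(A_{L})$.

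At $\omega_{0}=1$, $c(1)=0$, so $W(1)=0$ and $\sigma_{L}(1)=0$. With $A=V+V^{\trans}$ (symmetric) and $K=V-V^{\trans}$ (skew), a direct computation gives $W(e^{i\theta})=(1-\cos\theta)A-i\sin\theta\,K=2\sin^{2}(\theta/2)\bigl(A-i\cot(\theta/2)K\bigr)$, so $\Si\,W(e^{i\theta})=\Si\bigl(A-i\cot(\theta/2)K\bigr)$ while $\cot(\theta/2)\to\pm\infty$ as $\theta\to 0^{\pm}$. Now $-iK$ is Hermitian with spectrum symmetric about $0$, and $\dim\ker K=\mu_{L}-1$ (the rank of the radical of the intersection form on $H_{1}$ of the connected Seifert surface). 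As $\cot(\theta/2)\to\pm\infty$, the $2g$ nonzero eigenvalues of $\cot(\theta/2)\cdot(-iK)$ escape to $\pm\infty$ in equal numbers of each sign (net contribution $0$ to the signature), while only $\mu_{L}-1$ eigenvalues remain bounded; hence $|\sigma_{L}^{\pm}(1)|\le\mu_{L}-1$ and $|\ju^{\pm}(1)|=|\sigma_{L}^{\pm}(1)|\le\mu_{L}-1$. The main obstacle is precisely this local analysis at $\pm 1$: away from these points the elementary estimate $|\ju^{\pm}|\le\dim\ker$ is already sharp, but at $-1$ it yields only $\mult_{-1}(A_{L})$, so the extra factor of $2$ (and the evenness of $\mult_{-1}(A_{L})$) must be produced from the reflection symmetry of $\sigma_{L}$ — equivalently, from the twisted-Hermitian structure of the degenerate block over the ramified local ring at $-1$ — while at $1$ one must instead control a family of Hermitian matrices with eigenvalues running off to infinity; getting these constants exactly right is where the real work lies.
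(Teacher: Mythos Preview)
Your argument is correct, and it diverges from the paper's in two of the four parts.  For $\omega\neq\pm1$ the paper proves a Hermitian diagonalization lemma over the local ring $\Lambda_{(f_\rho)}$ (simultaneous row and column operations; this is their Lemma~3.1) and reads the jump off the diagonal entries; you instead use only ordinary Smith normal form to bound $\dim\ker W'(\omega_0)$ by $\mult_{\omega_0}(\det W')$, and then the elementary estimate $|\ju^\pm|\le\dim\ker$.  That is shorter and avoids Lemma~3.1 entirely, though the Hermitian diagonalization buys the paper more: it is what yields the mod~$4$ congruence in Theorem~2.4 and the module refinements of \S8, which your nullity bound does not see.  At $\omega=-1$ the paper does \emph{not} analyze $W$ directly (indeed, \S9.3 shows Lemma~3.1 fails at $-1$); it substitutes $t\mapsto t^2$, sets $W^*(t)=(1-t^2)V+(1-t^{-2})V^\trans$, and applies Lemma~3.1 at $\rho=\sqrt{-1}$, where the symmetry $B=C$ among odd-exponent diagonal entries forces the jump to come from the even-exponent ones and hence to be at most half the multiplicity.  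Your route---Rellich branches plus the reflection $\theta\mapsto 2\pi-\theta$ pairing eigenvalue functions---reaches the same conclusion by pure spectral analysis, and your evenness argument (from $\det W'(t^{-1})=\det W'(t)$) is more direct than the paper's, which extracts evenness from the $W^*$ diagonalization.  At $\omega=1$ the two arguments are essentially the same: the paper writes $W(t)=-\sin(2\theta)\bigl(i(V-V^\trans)-\tan\theta\,(V+V^\trans)\bigr)$ and you write $W(e^{i\theta})=2\sin^2(\theta/2)\bigl(A-i\cot(\theta/2)K\bigr)$, and both finish by noting that $i(V-V^\trans)$ has signature zero on a $2g$--dimensional nondegenerate block, so after perturbation only the $(\mu_L-1)$--dimensional radical can contribute.
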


This easily implies:   
\begin{cor}\label{bound}
The sum $|\sigma_L| +1 -\mu_L +(1/2) \mult_{-1}(A_L) $ is less than or equal to the number of roots of $A_L$ away from $1$ on the unit circle, counted with multiplicity.
\end{cor}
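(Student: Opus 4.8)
The plan is to accumulate the one‑sided jumps of $\sigma_L$ along the upper unit semicircle, traveling counterclockwise from $1$ to $-1$, bound each jump by Theorem~\ref{main}, and then use the $t\mapsto t^{-1}$ symmetry of $A_L$ to convert the resulting sum over the open semicircle into a count of \emph{all} roots of $A_L$ on $S^1$ that differ from $1$.

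First I would record the symmetry input. Since each $\Delta_i(L)$ is a gcd of minors of $tV-V^\trans$ and $t^{-1}V-V^\trans=-t^{-1}(tV-V^\trans)^\trans$, a $k$-minor of the second matrix is $(-t^{-1})^{k}$ times a $k$-minor of the first; hence $A_L(t^{-1})\doteq A_L(t)$, and therefore $\mult_\omega(A_L)=\mult_{\overline\omega}(A_L)$ for every $\omega\in S^1$ (here $A_L$ is a genuine nonzero Laurent polynomial, so all multiplicities in sight are finite). One also has $\sigma_L(\overline\omega)=\sigma_L(\omega)$ because $W(\overline\omega)=\overline{W(\omega)}$ and a Hermitian matrix has the same signature as its conjugate, though only the symmetry of $A_L$ is actually needed below.

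Next, write $\omega=e^{2\pi i x}$ and let $\omega_1,\dots,\omega_{n-1}$ be the roots of $A_L$ on the open upper semicircle, listed by increasing argument. By Theorem~\ref{main}, $\sigma_L$ is constant on each of the complementary open arcs joining $1$ to $-1$, so a telescoping computation starting from $\sigma_L(1)=0$ gives
\[
\sigma_L(-1)=\ju^+(1)+\sum_{j=1}^{n-1}\bigl(\ju^-(\omega_j)+\ju^+(\omega_j)\bigr)+\ju^-(-1).
\]
Applying the triangle inequality together with the three bounds of Theorem~\ref{main}, namely $|\ju^+(1)|\le\mu_L-1$, $|\ju^\pm(\omega_j)|\le\mult_{\omega_j}(A_L)$, and $|\ju^-(-1)|\le\tfrac12\mult_{-1}(A_L)$, yields
\[
|\sigma_L|\le(\mu_L-1)+\tfrac12\mult_{-1}(A_L)+\sum_{j=1}^{n-1}2\,\mult_{\omega_j}(A_L).
\]

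To finish, I would rewrite the last sum. By $\mult_{\omega_j}(A_L)=\mult_{\overline{\omega_j}}(A_L)$, the quantity $\sum_j 2\,\mult_{\omega_j}(A_L)$ equals $\sum_\omega \mult_\omega(A_L)$ with $\omega$ ranging over all roots of $A_L$ on $S^1$ other than $\pm1$; adding $\mult_{-1}(A_L)$ produces exactly the number $M$ of roots of $A_L$ on $S^1$ away from $1$, counted with multiplicity. Substituting $\sum_j 2\,\mult_{\omega_j}(A_L)=M-\mult_{-1}(A_L)$ and rearranging gives $|\sigma_L|+1-\mu_L+\tfrac12\mult_{-1}(A_L)\le M$, which is the assertion. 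The only point needing care, and the reason the $\tfrac12\mult_{-1}(A_L)$ term survives on the left side of the corollary, is the asymmetric role of the endpoint $-1$: Theorem~\ref{main} only permits this endpoint to contribute $\tfrac12\mult_{-1}(A_L)$ to the accumulated jump (and $\mult_{-1}(A_L)$ is even, so this is an integer), whereas the full multiplicity $\mult_{-1}(A_L)$ reappears when one assembles $M$. Beyond this bookkeeping there is no real obstacle, which is why the corollary follows easily from the theorem.
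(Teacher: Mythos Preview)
Your argument is correct and is exactly the natural unpacking of the paper's ``This easily implies'': telescope $\sigma_L(-1)-\sigma_L(1)$ along the upper semicircle, bound each one-sided jump by Theorem~\ref{main}, and use the conjugation symmetry of $A_L$ to turn the semicircle sum into the full count of roots on $S^1\setminus\{1\}$. The paper gives no further details beyond that one sentence, so you have simply written out what the authors left implicit.
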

 
We note that if $L$ is a knot, then both  $\Delta_L \ne 0$ and $\mult_{-1}(\Delta_L)=0$.  
We obtain the following generalization of the result mentioned in the first sentence of this paper. 

\begin{thm}\label{bound2}
Assume $\Delta_L(t) \ne 0$. The  signature function $\sigma_L(\omega)$  can have   discontinuities only at  roots of $\Delta_L(t)$. Also $|\sigma_L| + (1/2) \mult_{-1}(\Delta_L(t)) $ is less than or equal to the number of roots of $\Delta_L$  on the unit circle counted with multiplicity.    
\end{thm}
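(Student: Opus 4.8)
\textbf{Proof proposal for Theorem~\ref{bound2}.}
The plan is to deduce everything from Theorem~\ref{main} and Corollary~\ref{bound} together with one classical input: when $\Delta_L(t)\ne 0$, the polynomial $\Delta_L(t)$ is divisible by $(t-1)^{\mu_L-1}$, i.e.\ $\mult_1(\Delta_L)\ge \mu_L-1$. Granting this, note first that since $\Delta_L(t)\ne 0$ we have $A_L=\Delta_L$, so every occurrence of $A_L$ in the earlier results may be replaced by $\Delta_L$. Theorem~\ref{main} then already says the discontinuities of $\sigma_L$ lie among the roots of $(t-1)\Delta_L(t)$, and Corollary~\ref{bound} reads
\[
|\sigma_L| + 1 - \mu_L + \tfrac12\,\mult_{-1}(\Delta_L)\ \le\ N,
\]
where $N$ is the number of roots of $\Delta_L$ on $S^1$ different from $1$, counted with multiplicity. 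The two discrepancies between this and the desired statement --- the spurious factor $(t-1)$ and the term $1-\mu_L$ --- both disappear once we know $\mult_1(\Delta_L)\ge \mu_L-1$.

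For the divisibility input I would argue directly from a Seifert matrix. Let $F$ be a connected Seifert surface of genus $g$ for $L$, so $H_1(F;\zz)\cong \zz^{2g+\mu_L-1}$, let $V$ be the associated Seifert matrix, and recall that the (unique) top minor of $tV-V^\trans$ is its determinant, so $\Delta_L(t)\doteq\det(tV-V^\trans)$ while $V-V^\trans$ is the matrix of the intersection form on $H_1(F)$. The radical of that form is the kernel of $H_1(F)\to H_1(F,\partial F)$; from the long exact sequence of the pair (using $H_0(\partial F)=\zz^{\mu_L}$ and $H_0(F)=\zz$) this kernel is free of rank $\mu_L-1$ and splits off as a direct summand of $H_1(F)$. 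Choosing a basis of $H_1(F)$ whose last $\mu_L-1$ vectors span this radical, the intersection matrix takes the block form $V-V^\trans=\begin{pmatrix} J & 0\\ 0 & 0\end{pmatrix}$ with $J$ of size $2g$. Writing $V=\begin{pmatrix} A & B\\ C & D\end{pmatrix}$ in the same block sizes forces $C=B^\trans$ and $D=D^\trans$, so the last $\mu_L-1$ rows of $tV-V^\trans$ equal $\begin{pmatrix}(t-1)B^\trans & (t-1)D\end{pmatrix}$. Factoring $(t-1)$ out of each of these rows gives $(t-1)^{\mu_L-1}\mid\det(tV-V^\trans)\doteq\Delta_L(t)$, which since $\Delta_L\ne 0$ is exactly $\mult_1(\Delta_L)\ge\mu_L-1$. (This divisibility is classical and could instead simply be cited.)

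To finish, consider the discontinuity claim. If $\mu_L\ge 2$ then $1$ is itself a root of $\Delta_L$ by the lemma, so the roots of $(t-1)\Delta_L$ coincide with the roots of $\Delta_L$; if $\mu_L=1$, then Theorem~\ref{main} gives $|\ju^\pm(1)|\le\mu_L-1=0$, so $\sigma_L$ is continuous at $1$. In either case all discontinuities of $\sigma_L$ lie at roots of $\Delta_L$. For the inequality, add $\mult_1(\Delta_L)\ge\mu_L-1$ to the displayed form of Corollary~\ref{bound} above:
\[
|\sigma_L| + \tfrac12\,\mult_{-1}(\Delta_L)\ \le\ N + \mult_1(\Delta_L),
\]
and the right-hand side is precisely the number of roots of $\Delta_L$ on $S^1$ counted with multiplicity.

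The only substantive step is the divisibility lemma, and within it the one point requiring care is identifying the radical of the intersection form on a Seifert surface with the span of $\mu_L-1$ boundary-parallel classes and checking that it is a direct summand over $\zz$, so that the block normal form of $V-V^\trans$ is genuinely available; once that is in place the row-factoring is immediate and the rest is bookkeeping against Theorem~\ref{main} and Corollary~\ref{bound}.
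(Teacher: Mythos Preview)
Your proof is correct and follows the same overall architecture as the paper: establish the divisibility $\mult_1(\Delta_L)\ge\mu_L-1$ (the paper's Lemma~\ref{estmult}), then feed this into Theorem~\ref{main} and Corollary~\ref{bound} to absorb both the $(t-1)$ factor and the $1-\mu_L$ term.

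The one genuine difference is in how you prove the divisibility lemma. You argue geometrically, choosing a basis of $H_1(F)$ adapted to the radical of the intersection form so that the last $\mu_L-1$ rows of $tV-V^\trans$ manifestly carry a factor of $(t-1)$; the paper instead argues module-theoretically, observing that $M=\mathrm{coker}(tV-V^\trans)$ is torsion when $\Delta_L\ne 0$, and that $M\otimes_\Lambda \Lambda/(1-t)\cong\mathrm{coker}(V-V^\trans)\cong\rr^{\mu_L-1}$ forces at least $\mu_L-1$ of the invariant factors $d_i$ to be divisible by $(1-t)$. Your argument is more elementary and self-contained (no Smith normal form or tensor products needed), while the paper's version ties in more naturally with the module viewpoint used elsewhere in the paper and immediately yields the slightly sharper information that $\mu_L-1$ of the $d_i$ individually vanish at $1$, which is relevant to the refinements in \S\ref{module}. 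Your explicit case split at $t=1$ for the discontinuity claim is also a small improvement in exposition over the paper's terse ``using the triangle inequality'' wrap-up.
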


We also wish to consider the total jump at $\omega$: $\ju(\omega)= \ju^+(\omega)+\ju^-(\omega)$.

\begin{thm}\label{cong}
If $\omega \ne \pm 1,$ $\ju(\omega) = 2\mult_\omega (A_L)$ modulo 4.   If $\omega \ne \pm 1$ 
and $\mult_{\omega} (A_L)=1$, then $\ju^+(\omega)=  \ju^-(\omega)= \pm 1.$
\end{thm}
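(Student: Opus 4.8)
The plan is to examine the real-analytic path of Hermitian matrices $s\mapsto W\!\left(e^{2\pi i(x_0+s)}\right)$ near $s=0$, where $\omega=e^{2\pi i x_0}$. By Rellich's theorem (applicable since there is a single real parameter) this path can be written as $U(s)\,\mathrm{diag}\!\left(\lambda_1(s),\dots,\lambda_n(s)\right)U(s)^{*}$ with $U(s)$ unitary and each $\lambda_j$ real-analytic and real-valued near $0$. The branches with $\lambda_j(0)\neq 0$, together with those having $\lambda_j\equiv 0$, contribute an integer to the signature that is independent of $s$ for $s$ near $0$, and hence contribute $0$ to $\ju^{\pm}(\omega)$. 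Let $\lambda_1,\dots,\lambda_r$ be the remaining branches, those with $\lambda_j(0)=0$ but $\lambda_j\not\equiv 0$, and write $\lambda_j(s)=c_j s^{a_j}+O(s^{a_j+1})$ with $c_j\neq 0$ and $a_j\geq 1$.

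Two facts then drive the argument. The first is the elementary computation of signs of the limits: for small $s>0$ one has $\mathrm{sign}(\lambda_j(s))=\mathrm{sign}(c_j)$, while for small $s<0$ one has $\mathrm{sign}(\lambda_j(s))=(-1)^{a_j}\mathrm{sign}(c_j)$; since each of $\lambda_1,\dots,\lambda_r$ contributes $0$ to $\sigma_L(\omega)$, this gives
\[
\ju^{+}(\omega)=\sum_{j=1}^{r}\mathrm{sign}(c_j),\qquad
\ju^{-}(\omega)=\sum_{j=1}^{r}(-1)^{a_j+1}\mathrm{sign}(c_j).
\]
The second is the identity $\mult_\omega(A_L)=\sum_{j=1}^{r}a_j$. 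To see it I would use $W(t)=\tfrac{1-t}{t}(tV-V^{\trans})$: for $\omega\neq 1$ the minors of $W$ and of $tV-V^{\trans}$ of a fixed size differ by a unit at $\omega$, so $\mult_\omega(A_L)$ equals the least order of vanishing at $x_0$ among the $\rho\times\rho$ minors of $W\!\left(e^{2\pi i x}\right)$, where $\rho$ is the generic rank of $tV-V^{\trans}$ (and $A_L$ is the gcd of the $\rho\times\rho$ minors). A Cauchy--Binet expansion applied to the Rellich diagonalization shows that every such minor is, up to an analytic factor, a multiple of $\prod_{j=1}^{r}\lambda_j(s)$, whose order $\sum_j a_j$ is attained for a suitable choice of index set; here one also needs that the generic corank of $tV-V^{\trans}$ is not raised upon localizing at $\omega$. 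This identification---essentially the algebraic input behind Theorem~\ref{main}, obtained there via simultaneous row and column operations---is the step I expect to demand the most care.

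Granting these, the rest is bookkeeping. Adding the two displayed formulas, the terms with $a_j$ even cancel and those with $a_j$ odd double, so $\ju(\omega)=2\sum_{a_j\text{ odd}}\mathrm{sign}(c_j)$. Setting $p=\#\{\,j:a_j\text{ odd}\,\}$, a sum of $p$ terms each equal to $\pm 1$ is congruent to $p$ modulo $2$, so $\ju(\omega)\equiv 2p\pmod 4$; and $p\equiv\sum_j a_j=\mult_\omega(A_L)\pmod 2$ because even $a_j$ contribute nothing modulo $2$. Hence $\ju(\omega)\equiv 2\,\mult_\omega(A_L)\pmod 4$. If in addition $\mult_\omega(A_L)=1$, then $\sum_j a_j=1$ forces $r=1$ and $a_1=1$, and the displayed formulas collapse to $\ju^{+}(\omega)=\ju^{-}(\omega)=\mathrm{sign}(c_1)\in\{-1,+1\}$, which is the second assertion.
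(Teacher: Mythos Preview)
Your argument is correct and arrives at the same parity bookkeeping as the paper, but through a different diagonalization. The paper simply reuses the diagonal matrix $\dd$ already produced in the proof of Theorem~\ref{main} via Lemma~\ref{diag}: over $\Lambda_{(f_\omega)}$, simultaneous Hermitian row and column operations bring $W$ to a diagonal form whose nonzero entries are $\alpha_i f_\omega^{\epsilon_i}$ with $\sum_i \epsilon_i = \mult_\omega(A_L)$; an entry with $\epsilon_i$ odd contributes $\pm 2$ to $\ju(\omega)$ and one with $\epsilon_i$ even contributes $0$, and counting parities gives both assertions in a couple of lines. You instead invoke Rellich's theorem for an analytic unitary diagonalization, which then obliges you to recover the identity $\sum_j a_j = \mult_\omega(A_L)$ through a Cauchy--Binet computation on the $k\times k$ minors---precisely the step you flag as needing care. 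The paper's route is self-contained (Lemma~\ref{diag} is proved elementarily in the paper) and makes the link to $A_L$ automatic via the module $N\otimes\Lambda_{(f_\omega)}$; yours trades that for a standard analytic black box at the cost of the extra minor-counting step. The endgames coincide exactly: your pairs $(a_j,\mathrm{sign}(c_j))$ play the role of the paper's $(\epsilon_i,\mathrm{sign}(\alpha_i(\omega)))$.
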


  Let $h_L$ be the least $i$ such that $\Delta_i(L) \ne 0$.

   \begin{prop}\label{even}
For a link $L$,  $\mult_1(A_L)+\mu_L +h_L$ is even.
  \end{prop}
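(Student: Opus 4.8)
The plan is to reduce the statement to a fact about a Seifert matrix localized at $t=1$. Fix a connected Seifert surface of genus $g$ with Seifert matrix $V$, put $n=2g+\mu_L-1$, and set $M(t)=tV-V^\trans$, an $n\times n$ matrix over $\zz[t,t^{-1}]$. Let $r$ be the rank of $M(t)$ over $\qq(t)$. Then $\Delta_i(L)\ne 0$ exactly when $2g+\mu_L-i\le r$, so $h_L=2g+\mu_L-r$ and $A_L\doteq\Delta_{h_L}(L)$ is the greatest common divisor of the $r\times r$ minors of $M(t)$. Since $h_L=2g+\mu_L-r$, we get $\mult_1(A_L)+\mu_L+h_L\equiv\mult_1(A_L)+r\pmod 2$, so it suffices to prove that $\mult_1(A_L)+r$ is even.

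The decisive step is to pass from $M(t)$ to the Hermitian matrix $W(t)=(1-t)V+(1-t^{-1})V^\trans=-(t-1)t^{-1}M(t)$, which satisfies $W(t^{-1})^\trans=W(t)$. Because $W(t)$ is obtained from $M(t)$ by multiplying every entry by $-(t-1)t^{-1}$, a function of order $1$ at $t=1$, the greatest common divisor of the $r\times r$ minors of $W(t)$ equals $(t-1)^rA_L$ up to units, and so has multiplicity $\mult_1(A_L)+r$ at $t=1$. Thus it is enough to show this multiplicity is even.

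I would then localize and complete at $t=1$. Let $\mathcal O=\qq[[t-1]]$, a complete discrete valuation ring; the involution $t\mapsto t^{-1}$ extends to an involution $\iota$ of $\mathcal O$, and $u:=(t-1)-\iota(t-1)$ is a uniformizer with $\iota(u)=-u$, whose fixed subring is $\qq[[u^2]]$, every nonzero element of which has even order. Over $\mathcal O$ the matrix $W$ is $\iota$-Hermitian, i.e. $\iota(W)^\trans=W$, and still has rank $r$ over $\mathrm{Frac}(\mathcal O)$ since field extension preserves rank. The radical of $W$ in $\mathrm{Frac}(\mathcal O)^n$ meets $\mathcal O^n$ in a saturated submodule, hence a direct summand with free complement; the corresponding simultaneous row and column operations give $P^*WP=\mathrm{diag}(W',0)$ with $P\in\mathrm{GL}_n(\mathcal O)$ and $W'$ an $r\times r$ $\iota$-Hermitian matrix with $\det W'\ne 0$. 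Since multiplying by an invertible matrix on either side does not change the ideal of $r\times r$ minors (and the multiplicity of a gcd is the minimum of the multiplicities of the generators), the multiplicity at $t=1$ of the gcd of the $r\times r$ minors of $W$ equals $\mathrm{ord}_u(\det W')$. Finally, because $W'$ is $\iota$-Hermitian, $\det W'=\iota(\det W')$, so $\det W'\in\qq[[u^2]]$ and $\mathrm{ord}_u(\det W')$ is even; this finishes the argument.

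The main obstacle, and really the point of the proof, is the substitution to $W(t)$. The matrix $M(t)$ is skew-Hermitian only up to the unit $-t^{-1}$, and accordingly $\iota(\det M')=(-t^{-1})^r\det M'$, which gives no control on the parity of $\mathrm{ord}_{t-1}(\det M')=\mult_1(A_L)$; it is exactly the classical passage to $W(t)=(1-t)V+(1-t^{-1})V^\trans$ that makes the form genuinely Hermitian and its determinant $\iota$-invariant. The remaining points are routine but must be checked: that completing at $t=1$ changes neither $r$ nor the multiplicity at $t=1$, and that over the discrete valuation ring $\mathcal O$ the radical really splits off as a direct summand, so that $W'$ is nondegenerate over $\mathrm{Frac}(\mathcal O)$.
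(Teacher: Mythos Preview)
Your argument is correct, but it differs substantially from the paper's. Both proofs rest on the same first move: replacing $tV-V^\trans$ by the genuinely Hermitian $W(t)=(1-t)V+(1-t^{-1})V^\trans$ and reducing the claim to the parity of $e_{t-1}(W)=\mult_1(A_L)+r$. From there the paths diverge. The paper avoids working at $t=1$ directly (where its diagonalization Lemma~\ref{diag} fails) by substituting $t\mapsto t^4$, forming $W^{**}=(1-t^4)V+(1-t^{-4})V^\trans$, and transferring the question to the prime $f_{\sqrt{-1}}=t+t^{-1}$; the evenness then follows from the same $B=C$ parity argument used at $-1$ in \S\ref{subsec.jump-1}, which ultimately relies on signature symmetry. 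Your route stays at $t=1$: you complete, observe that the involution on $\qq[[t-1]]$ has fixed ring $\qq[[u^2]]$ for the anti-symmetric uniformizer $u=t-t^{-1}$, split off the radical over the DVR, and read off evenness from $\det W'=\iota(\det W')$. This is a more self-contained algebraic argument that uses no signature theory and no substitution trick; the paper's proof, by contrast, is shorter in context because it recycles the machinery already built for Theorem~\ref{main}. Your final paragraph correctly identifies why the passage to $W$ is essential in either approach.
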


We prove  Theorem~\ref{main} in~\S \ref{sec.main}  and prove  Theorem~\ref{bound2} in~\S \ref{sec.bound2}. 
We discuss the proof of Theorem~\ref{cong} in~\S \ref{sec.cong}.
We give a proof of Proposition  \ref{even} in~\S \ref{evenp}.
In~\S \ref{module}, we discuss further restrictions on the signature function imposed by the 
structure  of the $\rr[t,t^{-1}]$--module presented by
the matrix $tV-V^\trans$.
In the last section, we illustrate these results by studying the signature functions of some links. For several of these examples, the signature function jumps at $-1$.

Our investigation of the signature function of links  began when   Stoimenow enquired, in connection with his work on~\cite{stoimenow}, whether  $\Delta_L \ne 0$ implies that  $|\sigma_L|$ is less than or equal to the number of roots of $\Delta_L$  on the unit circle, counted with multiplicity.  From the published version of~\cite{stoimenow}, we learned of~\cite{liechti}, in which an appendix by Peter Feller and Livio Liechti affirmatively answers the question that  Stoimenow posed.    Feller and Liechti's  elegant  proof   ignored the possibility of a discontinuity of the signature function at $-1$, but this omission may be easily repaired.  We were also influenced by Stavros Garoufalidis's work~\cite[Lemma 2.1]{garoufalidis} which discusses some of these results for knots. 
 The method of proof used here is quite different from the methods used by  Garoufalidis, Stoimenow and   Feller-Liechti.  
A similar argument to that used in the  proof of  Lemma~\ref{diag} was sketched 
by Kate  Kearney  in~\cite{kearney}.

 
\section{Diagonalizing Hermitian matrices over $\rr[t,t^{-1}]$ localized at certain prime ideals} \label{dp}

Let $\Lambda= \rr[t,t^{-1}]$ with the involution denoted ``bar''  determined by  $\overline{t} = t^{-1}$ and $\bar x=x$ for $x \in \rr.$  For $\rho \in S^1 \setminus \{\pm 1\}$,   
 let $f_\rho=  t+t^{-1} -(\rho+\rho^{-1})$;  set $f_{\pm 1} = t \mp 1$. Note in each case that $f_\rho$ is prime in $\Lambda$.
For $\rho \in S^1$,
consider the ring $\Lambda_{(f_\rho)}$, in which all elements prime to $f_\rho$ have been inverted. It is a local ring with involution.
Also, evaluating $t$ at $\rho$ defines a  homomorphism  between rings with involution, $\Lambda_{(f_\rho)} \rightarrow \cc$, where $\cc$ is the complex numbers equipped with complex conjugation as the involution. If $g \in \Lambda_{(f_\rho)}$, this evaluation will be denoted  $g(\rho)$, as usual.

A matrix $A$ over $\Lambda_{(f_\rho)}$ presents a $\Lambda_{(f_\rho)}$--module. Two matrices related by an invertible row or column operation present isomorphic modules. The torsion submodule of a finitely generated $\Lambda_{(f_\rho)}$--module has an order which is well-defined up to multiplication by units
\cite[\S 1]{milnor}. 
This order can be written as  $(f_\rho)^e$, where  $(f_\rho)^e$ is the maximal power of $f_\rho$ that divides  the product of the  non-zero  entries of a diagonal presentation matrix. We denote this exponent $e_\rho(A)$.

A Hermitian matrix  $H$ over $\Lambda_{(f_\rho)}$ may be evaluated at $t=\omega$  lying in some  open neighborhood of $\rho$ on the unit circle to obtain a Hermitian matrix with  entries in $\cc$.  This is because only a finite number of denominators appear among the  entries of $H$. Thus we can consider the  signature $\sigma( H(\omega))$ for $\omega$
in this neighborhood and the one-sided jumps  at $\rho$ (defined in analogy with the signature jumps of  links in~\S1) which we denote
 by $\ju^\pm (H,\rho)$. Moreover $\sigma( H(\rho))$ and $\ju^\pm (H, \rho)        $ are preserved by
a invertible row operation over $\Lambda_{(f_\rho)}$ followed by corresponding conjugate column operation. We refer to such a pair of operations as a simultaneous row and column operation.

\begin{lem}  \label{diag}  Assume $\rho \ne \pm 1$. Every Hermitian matrix $H$ over $\Lambda_{(f_\rho)}$ can be converted to a diagonal matrix  by  performing a sequence of  simultaneous  row and column  operations.  Moreover  $|\ju^\pm (H, \rho)          |\le  e_\rho(H)$.
\end{lem}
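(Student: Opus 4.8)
The plan is to use that $\Lambda_{(f_\rho)}$ is a discrete valuation ring whose maximal ideal is generated by the involution-invariant element $f_\rho$ (here $\overline{f_\rho}=f_\rho$ because $\rho\in S^1$), and whose residue field, via evaluation at $\rho$, is $\cc$ with complex conjugation. Two things follow that I would record at the outset: the $f_\rho$-adic valuation, which on a diagonal matrix recovers $e_\rho$ as the sum of the valuations of the nonzero entries, is unchanged by the involution; and every involution-fixed element of $\Lambda_{(f_\rho)}$ takes real values at points of $S^1$ near $\rho$, so that after diagonalization the evaluated diagonal entries are real numbers. The argument then has two parts: the diagonalization, and a direct computation of the one-sided jumps for a diagonal Hermitian form, using that simultaneous row and column operations preserve both $\ju^\pm(H,\rho)$ and $e_\rho(H)$.

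For the diagonalization I would induct on the size of $H$, the content being a single reduction step. Suppose $H\ne 0$ and let $m$ be the least valuation occurring among the entries of $H$. The key move is to produce, by one simultaneous row and column operation, a congruent Hermitian matrix with a \emph{diagonal} entry of valuation exactly $m$. If some diagonal entry already has valuation $m$, take it; otherwise all diagonal entries have valuation greater than $m$, so pick an off-diagonal $h_{ij}$ with valuation $m$, write $h_{ij}=f_\rho^{\,m}w$ with $w$ a unit, and add $w$ times row $j$ to row $i$ together with the conjugate column operation. The new $(i,i)$ entry is $h_{ii}+w\bar w\,h_{jj}+f_\rho^{\,m}(2w\bar w)$; since $2w\bar w$ is a unit while $h_{ii}$ and $w\bar w\,h_{jj}$ have valuation greater than $m$, this entry has valuation exactly $m$. (More flexibly one may use any unit $u$ with $u\bar w+\bar u w$ a unit, which exists because $w$ is nonzero in the residue field $\cc$.) Row and column operations cannot lower valuations, so every entry of the new matrix still has valuation at least $m$; moving the distinguished entry to the $(1,1)$ position by a simultaneous permutation, it divides every entry of the first row and column, and the usual symmetric pivot — for each $j\ge 2$, subtract $h_{j1}h_{11}^{-1}$ times row $1$ from row $j$, then apply the conjugate column operation — clears the first row and column without leaving $\Lambda_{(f_\rho)}$. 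This exhibits the form as $(h_{11})\oplus H'$ with $H'$ Hermitian of smaller size, and the induction closes. I expect the one delicate point to be exactly this manufacture of a minimal-valuation diagonal entry, and in particular the bookkeeping that the extra terms genuinely have strictly larger valuation; everything else is the standard pivoting argument.

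For the bound, I may now assume $H=\mathrm{diag}(g_1,\dots,g_n)$ with each $g_k$ involution-fixed, so $e_\rho(H)=\sum_{g_k\ne 0}e_\rho(g_k)$, and for $\omega\in S^1$ near $\rho$ one has $\sigma(H(\omega))=\sum_k\mathrm{sign}(g_k(\omega))$ with each $g_k(\omega)$ real. A zero or unit entry gives a sign that is locally constant at $\rho$ (namely $0$, respectively $\mathrm{sign}(g_k(\rho))\ne 0$) and so contributes nothing to either one-sided jump. If instead $g_k=f_\rho^{\,e_k}v_k$ with $e_k=e_\rho(g_k)\ge 1$ and $v_k$ a unit, write $\omega=e^{2\pi i y}$ and $\rho=e^{2\pi i y_0}$: then $f_\rho(\omega)=2\cos(2\pi y)-2\cos(2\pi y_0)$ is a nonzero real number whose sign is constant on each side of $y_0$ and opposite on the two sides, since $\rho\ne\pm 1$ forces $\cos$ to be strictly monotone near $y_0$; and $v_k(\omega)$ is real of constant sign near $\rho$. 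Hence $\mathrm{sign}(g_k(\omega))$ is $0$ at $\rho$ and $\pm 1$ on each side, so $g_k$ contributes exactly $\pm 1$ to each of $\ju^+(H,\rho)$ and $\ju^-(H,\rho)$. Summing, $|\ju^\pm(H,\rho)|\le\#\{k:e_\rho(g_k)\ge 1\}\le\sum_{g_k\ne0}e_\rho(g_k)=e_\rho(H)$, which is the assertion.
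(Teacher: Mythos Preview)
Your proof is correct and follows the same overall approach as the paper: diagonalize $H$ over $\Lambda_{(f_\rho)}$ by symmetric pivoting, then read off the one-sided jumps entry by entry from the diagonal form. Your execution is in fact a bit cleaner --- the paper first factors out common powers of $f_\rho$ and then manufactures a unit diagonal entry via a case split on $\alpha\in\{1,t\}$ (using $\rho\ne\bar\rho$ to guarantee one of $b+\bar b$, $tb+\overline{tb}$ is a unit), whereas you work directly at the minimum valuation $m$ and take the row multiplier to be the unit part $w$ of the offending off-diagonal entry, so the cross term $2w\bar w$ is manifestly a unit with no case analysis needed.
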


\begin{proof} 
For simplicity of notation in this proof,  let $f$ denote $f_\rho$.

If all  entries of $W$ are divisible by $f$, then factor $f$ out of the matrix and proceed with the simpler matrix $W'$ constructed by dividing all entries of $W$ by $f$.  Since $f$ is symmetric, $W'$ is Hermitian. If $W'$ can be diagonalized as above, the same operations will diagonalize $W$.

If some diagonal entry is not divisible by $f$, it can be used to clear out a column and row to reduce the problem to one for a smaller matrix.

If all the  diagonal  entries are  divisible by $f$ but some  non-diagonal element is not divisible by $f$, arrange, by permuting the rows and columns, that it is the $(2,1)$--entry.  It can be used to clear the first column (not including the $(1,1)$--entry), and the conjugate operations clear the top row.  
The top left $(2,2)$--block looks like
$$ 
\left(
\begin{array}{cc }
 a f^k &    \overline{b}    \\
b &   c   f^m
\end{array}
\right).
$$
Notice that $b(\rho) \ne 0$ since $b$ is not divisible by $f$. 
 We will choose an $\alpha$, add $\alpha$ times the second row to the first row and add $\overline{\alpha}$ times the second column to  the first column,  making  the $(1,1)$--entry 
 $$  a f^k +\alpha b + \overline{\alpha b} + \alpha \overline{\alpha} c f^m.$$
 We want to choose $\alpha$ so that $\alpha b + \overline{\alpha b}$ evaluated at $\rho$ is nonzero.  If $b(\rho) + \overline{b(\rho)} \ne 0$, let $\alpha = 1$.  If $b(\rho) + \overline{b(\rho)} = 0$, then let $\alpha = t$.  We claim $\rho b(\rho) + \overline{\rho}\overline{b(\rho)} \ne 0$.  Suppose otherwise.  Then substituting $\overline{b(\rho)} = -b(\rho) $ yields $\rho b(\rho) - \overline{\rho} b(\rho) = 0$, so $\rho - \overline{\rho} = 0$.  But $\rho$ is not fixed by the involution, and we have reached a contradiction.
In this way, we can obtain a diagonal entry not divisible by $f.$

Continuing in this way we obtain a diagonal matrix $\dd$. As $\dd$ is obtained from $H$ by simultaneous row and column operations, $e_\rho(H)= e_\rho( \dd)$ and
$\ju^\pm(H,\rho)= \ju^\pm(\dd,\rho)$.
Finally it is easy to see that the contribution of a diagonal entry of $\dd$ to $\ju^\pm(\dd,\rho)$ is $\pm 1$ if the multiplicity of $\rho$ as a root of this diagonal entry is non-zero. Otherwise the contribution of a diagonal entry is zero.
As $e_\rho(\dd)$ is the sum over the diagonal positions of the multiplicity of $\rho$ as a root of these entries, $|\ju^\pm( \dd, \rho)    |\le  e_\rho(\dd)$. The result follows. 
\end{proof}


\section{Proof of Theorem~\ref{main}} \label{sec.main}
   
Let $V$ be an $(n \times n)$ Seifert matrix for a link.   The matrix $tV-V^\trans$ presents a $\Lambda$--module $M$.
We can perform independent  row and column operations to diagonalize $tV-V^{\trans}$, ending with a diagonal matrix with diagonal entries $[d_1, \ldots, d_k, 0, \ldots, 0]$, where $d_i \ne 0$ and $d_i $ divides $d_{i+1} $ for $1\le i \le k-1$.  This gives a    decomposition:
$$ M   = {\Lambda}^{n-k}\  \ooplus_{i=1}^k  \frac{ \Lambda}{\left< d_i\right>}  .$$
It follows that  
 $A_L= \Delta_{n-k+1}(L) \sim_{\Lambda} \prod_i d_i$;  
 where we let $\sim_R$ mean equal up to a multiplicative factor from the units of a ring $R$.

 The matrix $W = (1-t)V + (1-t)V^\trans$ presents a 
$\Lambda$--module, say,  $N$.  Since $$W = t^{-1}(1-t)(tV - V^\trans),$$ the same row and column operations that diagonalize $tV - V^\trans$ also diagonalize $W$ and we see that    
 $$ N= \Lambda^{n-1}\   \ooplus_{i=1}^k  \frac{ \Lambda}{\left< (1-t)d_i\right>}.$$


 \subsection{Step function} \label{subsec.step}  
 
We first want to observe that, as in the case of knots, the signature function is a step function.  
There is a  diagonalization of $W$ over the field of fractions $\rr(t)$.  That is, there is a determinant one matrix $A(t)$ with entries in $\rr(t)$ such that $A(t) W(t) {A^\trans}(t^{-1})$ is diagonal,   with diagonal entries rational functions: $[q_1, \ldots, q_k, 0, \ldots 0]$.  (Since the module $N$ described above becomes an $n-k$ dimensional vector space with $\rr(t)$ coefficients, we can use the same value of $k$ here as above.)  For all but the finite set of $\rho \in S^1$ for which $A(\rho)$ is not defined, this provides a diagonalization of $W(\rho)$.  Away from this set of singular values, we see that the signature of $W$ can jump only at zeroes and poles of the diagonal entries. Thus, the signature function is a step function; in particular, it has a finite number of discontinuities.  


\subsection{Jumps away from $\rho = \pm 1$\label{awayfpm1}}

Let $\rho \in S^1$ be a fixed complex number.  By Lemma~\ref{diag}, we can diagonalize $W$ over the ring $\Lambda_{(f_\rho)}$.  Since this matrix presents the  module $N \otimes \Lambda_{(f_\rho)}$, we have that after reordering the entries, this  diagonalization  of $W$ has entries $[\alpha_1 (f_\rho)^{\epsilon_1}, \ldots , \alpha_k (f_\rho)^{\epsilon_k}, 0, \ldots , 0]$ where the $\alpha_i$ are units in $\Lambda_{(f_\rho)}$ and 
 $(f_\rho)^{\epsilon_i}$ is the maximum power of $f_\rho$ dividing $d_i$.

The jumps at the discontinuities, $\ju^\pm$, arise from the diagonal terms for which $\epsilon_i >0$.  It is now evident that these jumps are bounded by the multiplicity of $\rho$ in $A_L$, as was to be proved.
 Thus $\sigma_L(\omega)$ is a step function as claimed.


 \subsection{Jump   at $-1$} \label{subsec.jump-1} 
 
 The diagonalization lemma, Lemma~\ref{diag}, does not apply for $\rho = -1$ 
 (see ~\S \ref{counter}).  
 A transformation corrects for this.  Let $W^*$ be the Hermitian matrix $(1-t^2)V + (1- t^{-2})V^\trans$.  The jumps of the signature function of $W$ at $-1$ correspond to the jumps of the signature function of $W^*$ at $\rho = \sqrt{-1}$.  Notice that $f_\rho = t^{-1} + t$.
 
 The diagonalization of $W^*$ in $\Lambda_{({f_\rho})}$ has nonzero entries of the following types:  
 \begin{itemize}
 \item $\alpha_i$, where $\alpha_i$ is a unit.\vskip.05in
 \item $\beta_i (f_\rho)^{b_i}$, where $\beta_i(\rho) >0$ and $b_i$ is odd.\vskip.05in
  \item $\gamma_i (f_\rho)^{c_i}$, where $\gamma_i(\rho) < 0$ and $c_i$ is odd.\vskip.05in
  \item $\delta_i (f_\rho)^{d_i}$, where $\delta_i(\rho) >0$ and $d_i$ is even.\vskip.05in
  \item $\eta_i (f_\rho)^{e_i}$, where $\eta_i(\rho) <0$ and $e_i$ is even.\vskip.05in
  \end{itemize}
 
 Suppose that the number of elements for type $\beta$, $\gamma$, $\delta$ and $\eta$ are given by $B$, $C$, $D$, and $E$, respectively.  The total jump in the signature function of $W$ at $-1$ is 0, by conjugation symmetry, so the same is true for the total jump in the signature function of $W^*$ at $\sqrt{-1}$.  This implies that $B = C$.  
 We then see that $|\ju^\pm(W^*,\sqrt{-1})| \le |D - E |$. Since $D$ and $E$ both correspond to diagonal elements for which $f_{\rho}$ has even exponent, it is now clear that the jump is at most one half the multiplicity of $-1$ in $A_L$. It   follows that  $\mult_{-1}(A_L)$ is even and  $|\ju^\pm(W, -1)| \le (1/2)\mult_{-1} (A_L)$, as was to be proved. 
 
 
 \subsection{Jump at $1$} \label{subsec.jump1}

Since $\sigma_L(1)=0$, $|\ju^\pm(1)| =| \sigma_L^\pm(1)|$.
We wish to show that $ \mu_L-1$ is an upper bound for $| \sigma_L^\pm(1)|$. We let $t = \cos(2\theta) + \i\sin(2\theta)$.  Expressing $W(t) = (1-t)V + (1-t^{-1})V^\trans$ in terms of $\theta$ and simplifying, we find
$$W(t) = -\sin(2 \theta)  \left( \i(V - V^\trans) - \tan(\theta)(V+V^\trans) \right).$$

The matrix  $V-V^\trans$ gives the intersection form for the Seifert surface. Thus we may take $V-V^\trans$ to be the direct sum of  $g$ copies of  $[\begin{smallmatrix} 0&1\\ -1&0 \end{smallmatrix}]$ direct sum with a $(\mu_L-1) \times (\mu_L-1)$ zero matrix.
It follows that the  form  $i(V - V^\trans)$ has 
$g$ eigenvalues $1$ and $g$ eigenvalues $-1$. 
After a small perturbation, the number of positive and negative eigenvalues will both continue to be at least $g$, so that  the absolute value of the signature is at most $\mu_L -1$.  (Alternatively, the matrix $\i(V - V^\trans)$ is congruent to a diagonal matrix with its first $2g$ diagonal entries alternating between $1$ and $-1$.   
Thus, there are exactly $g$ sign changes in the  sequence of the first  leading $2g$ principle minors of this congruent matrix. 
By the continuity of determinants, the same is true for a small perturbation.   
It follows that the signature of the upper left $2g \times 2g$ block of a small perturbation is also zero. This upper left $2g \times 2g$ block is nonsingular.
It follows that  the absolute value of the signature of a small perturbation is at most $\mu_L-1$.)


\section{Proof of Theorem~\ref{bound2}} \label{sec.bound2}

\begin{lem}\label{estmult} If $\Delta_L(t) \ne 0$, then  $\mult_1\left(\Delta_L(t)\right) \ge  \mu_L-1$.
\end{lem}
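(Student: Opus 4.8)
The plan is to compute $\mult_1(\Delta_L(t))$ from the structure of the $\Lambda$-module presented by $tV - V^\trans$, using the decomposition already obtained in \S\ref{sec.main}. Recall that diagonalizing $tV - V^\trans$ over $\Lambda = \rr[t,t^{-1}]$ gave
$$M = \Lambda^{n-k} \ooplus_{i=1}^k \frac{\Lambda}{\langle d_i\rangle},$$
with $d_i \mid d_{i+1}$ and $\Delta_L = A_L \sim_\Lambda \prod_i d_i$ (the last equality using $\Delta_L \ne 0$, so $n - k = 0$ and $k = n$). Thus $\mult_1(\Delta_L) = \sum_{i=1}^n \mult_1(d_i)$, and it suffices to show this sum is at least $\mu_L - 1$. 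Equivalently, I want to show that $(t-1)$ divides at least $\mu_L - 1$ of the $d_i$; by the divisibility $d_i \mid d_{i+1}$, this is the statement that $d_{n-\mu_L+2}, \ldots, d_n$ are all divisible by $t-1$, i.e. that the module $M \otimes \Lambda_{(t-1)}$ has at least $\mu_L - 1$ "large" summands at the prime $(t-1)$.

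\medskip

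The key input is the classical fact that $H_1$ of the infinite cyclic cover of a $\mu_L$-component link, evaluated modulo the augmentation ideal, has rank at least $\mu_L - 1$; concretely, the $\Lambda$-module presented by $tV - V^\trans$ has the property that $M/(t-1)M$ has $\zz$-rank at least $\mu_L - 1$. This is standard (it reflects that a Seifert surface for an $\mu_L$-component link has first homology of rank $2g + \mu_L - 1$, and the first homology of the complement surjects onto $\zz^{\mu_L}$, so after going to the cover and reducing, a rank-$(\mu_L-1)$ piece survives). Granting that, $\dim_{\rr} \big( (M\otimes\rr) / (t-1)(M\otimes\rr)\big) \ge \mu_L - 1$. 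Now read this off the diagonal form: $(M\otimes\rr)/(t-1) \cong \bigoplus_{i=1}^n \rr[t,t^{-1}]/\langle d_i, t-1\rangle$, and the $i$-th summand is nonzero (one-dimensional over $\rr$, since $\rr[t,t^{-1}]/(t-1) = \rr$) exactly when $(t-1) \mid d_i$. Hence the number of indices $i$ with $(t-1)\mid d_i$ is at least $\mu_L - 1$, giving $\mult_1(\Delta_L) = \sum_i \mult_1(d_i) \ge \#\{i : (t-1)\mid d_i\} \ge \mu_L - 1$.

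\medskip

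The main obstacle is pinning down the rank statement $\operatorname{rank}_\zz\big(M/(t-1)M\big) \ge \mu_L - 1$ in a self-contained way from the Seifert form, rather than citing homology of covers. A clean route: note $M/(t-1)M$ is presented by the integer matrix $(t-1)V - (V^\trans - V)\big|_{t=1} = V - V^\trans$ (up to sign), i.e. by the intersection form of the Seifert surface; but as recalled in \S\ref{subsec.jump1}, $V - V^\trans$ is, up to congruence, $g$ copies of $[\begin{smallmatrix} 0&1\\-1&0\end{smallmatrix}]$ together with a $(\mu_L-1)\times(\mu_L-1)$ zero block. Wait — more care is needed: $M/(t-1)M$ is the cokernel of $(tV-V^\trans)|_{t=1} = V - V^\trans$ acting on $\zz^n$, and the cokernel of a matrix congruent to that block form has a free summand of rank $\mu_L - 1$ coming from the zero block. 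Alternatively, localizing, $M \otimes \Lambda_{(t-1)}$ needs $\ge \mu_L-1$ generators even after inverting everything prime to $(t-1)$ and passing to the residue field, which is precisely $\dim_\rr \operatorname{coker}(V - V^\trans) = \mu_L - 1$. Either way the zero block of size $\mu_L - 1$ in the intersection form is what forces $\mu_L - 1$ of the elementary divisors $d_i$ to vanish at $t = 1$, and the divisibility chain $d_i \mid d_{i+1}$ then channels this into $\mult_1(\Delta_L) \ge \mu_L - 1$.
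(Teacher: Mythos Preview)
Your proof is correct and follows essentially the same route as the paper: tensor the module $M$ presented by $tV-V^\trans$ with $\Lambda/(t-1)\cong\rr$, identify the result with the cokernel of $V-V^\trans$, and use the block form of $V-V^\trans$ from \S\ref{subsec.jump1} to see that exactly $\mu_L-1$ of the $d_i$ are divisible by $t-1$. The paper's write-up is a bit more streamlined (it goes directly to the $\rr$-tensor rather than detouring through $\zz$-ranks and localization), but the content is the same.
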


\begin{proof}
Since $ \Delta_L(t) \ne 0$, the $\Lambda$--module $M$ presented by $V - tV^\trans$, has no free summand: $M \cong \oplus_{i=1}^n  \frac{\Lambda}{\left< d_i \right>}$ where each $d_i $ divides $d_{i+1}$ and is nonzero. Tensoring with $\rr$, viewed as a $\Lambda$--module with trivial action (formally, $\rr \cong \frac{\Lambda}{\left< 
1-t \right>
}
$), yields the $\rr$--vector space presented by $V - V^\trans$.  For a link of $\mu$ components, this is isomorphic to $\rr^{\mu -1}$ 
(see the description of $V-V^\trans$ in \S\ref{subsec.jump1}). 
 However, $\frac{\Lambda}{\left< d_i \right>} \otimes   \frac{\Lambda}{\left< 
 1-t
\right>}$ is trivial unless $d_i$ is divisible by $(1-t)$.  Thus,  $(\mu -1)$ of the $d_i$ are divisible by $(1-t)$.  This implies the statement of the lemma.

\end{proof}

By Theorem~\ref{main}, $\mu_L-1 \ge |\ju^\pm(1)|$.   Using the triangle inequality, we easily obtain Theorem~\ref{bound2} from Corollary~\ref{bound}.

\begin{rem} { \em For $L= 8n8(0,0,0)$ or $8n8(1,0,0)$, discussed in~\S \ref{L8n8} below, we have  $\Delta_L(t) = 0$, and 
$\mult_1(A_L(t))= 2 <3= \mu_L-1.$ Thus Lemma~\ref{estmult} cannot be modified 
to read  $\mult_1(A_L(t)) \ge  \mu_L-1$ in the situation where $\Delta_L(t) = 0.$
By  adapting the argument given above, one can show  that $$\mult_1(A_L) \ge \mu_L-h_L.$$
Note that  Proposition~\ref{even} says that this inequality is also a congruence modulo two.
}
\end{rem}

   
\section{Proof of Theorem~\ref{cong}} \label{sec.cong}

Let $\rho \ne \pm 1$.
If $\d_i$  is an entry of the diagonal matrix 
$\dd$ appearing in 
\S \ref{awayfpm1}
and has $\mult_\rho(\d_i)$  odd, then $\d_i$ contributes $\pm 2$ to $\ju(\rho)$. 
If $\d_i$   is an entry of 
$\dd$ and has $\mult_\rho(\d_i)$  even, then $\d_i$ contributes $0$ to $\ju(\rho)$.
If $\mult_\rho(A_L)$ is odd, then an odd number of entries $\{\d_i\}$ have $\mult_\rho(\d_i)$  odd.
If $\mult_\rho(A_L)$ is even, then an even number of entries $\{\d_i\}$ have $\mult_\rho(\d_i)$  odd.
The result stated in the first sentence follows.

If $\rho \ne \pm 1$, and $\mult_\rho(A_L)=1$, then $\mult_\rho(\d_i)$  is nonzero for exactly one $i$.
The result stated in the second sentence follows.


\section{Proof of Proposition \ref{even} }\label{evenp}

Let $W^{**}=(1-t^4)V + (1- t^{-4})V^\trans$. The jumps of the signature function of $W$ at $1$ correspond to the jumps of the signature function of $W^{**}$ at $\rho=\sqrt{-1}$.
Thus it
 follows that $$\ju^+(W^{**},\sqrt{-1})= -\ju^-(W^{**},\sqrt{-1}).$$   Notice that $(1-t^4)= (1-t)(1+t)(1+t^2)$ and $f_\rho = t^{-1} + t$.
 Moreover, up to $\sim_\Lambda$, $f(t)=1-t$ is the only irreducible polynomial  for which $f(t^4)$ has $1+t^2$ as a factor.  
It follows that $e_{t-1}(W)=e_{t+t^{-1}}(W^{**})$.
The  argument  in \S \ref{subsec.jump-1} shows that $e_{t+t^{-1}}(W^{**})$ is even.
Thus $e_{t-1}(W)$ is even. But  $$e_{t-1}(W)=e_{t-1}((1-t)(tV-V^\trans))= e_{t-1}(tV-V^\trans)+ 2g +\mu_L-h_L.$$
As $\mult_1(A_L)= e_{t-1}(tV-V^\trans)$, the result follows.


\section{A module approach to the signature function}\label{module}

 The proof  of Lemma~\ref{diag} and of Theorem~\ref{main}  in the case of $\rho = -1$ (as given in \S \ref{subsec.jump-1})   point  to a general approach to understanding the signature function.  Let $V$ be an $n\times n$ Seifert matrix for a link $L$.  The $\Lambda$--module $M$ presented by $V - tV^\trans $ has a direct sum composition     
$$M\cong \frac{\Lambda}{\left< d_1\right>} \oplus  \frac{\Lambda}{\left< d_2\right>} \cdots \oplus   \frac{\Lambda}{\left< d_k\right>} \oplus \Lambda^{n-k},$$  
where the $d_i \in \Lambda$ are nonzero and $d_i $ divides $d_{i+1}$ for all $i < k$.  We have that $A_L  \sim_\Lambda \prod d_i$; also, 
$\Delta_L \ne 0$ if and only if $n = k$.  
  More generally, $h_L=n-k+1$. 
Let $X$ denote the infinite cyclic cover of $S^3 \setminus L$ specified by the linking number.
According to \cite[Thm 6.5]{lickorish}, the $\zz[t,t^{-1}]$--module $H_1(X,\zz)$  is presented by the matrix  $V - tV^\trans$. 
It follows that $M$ is a description of $H_1(X,\rr)$ as a $\Lambda$--module.

For any $\rho \in S^1$,  let $\phi_o(L, \rho)$ and $\phi_e(L,\rho)$ to be the number of $d_i$ in which $f_\rho$ has a positive
 exponent that is odd or even, respectively. Bounds on the signature function are easily 
given
in terms of these functions; the proofs follow along the same lines as our earlier work.  
Here is an example of the type of result that can be attained in this way.

\begin{thm} If $\rho \ne \pm 1$, then $|\ju(\rho)| \le 2 \phi_o(L, \rho)$ and $|\ju(\rho)| = 2 \phi_o(L, \rho)$ modulo 4. Also $|\ju_\pm(-1)| \le   \phi_e(L, -1)$ and $|\ju_\pm(-1)| =  \phi_e(L, -1)$ modulo 2. 
Thus $$ | \sigma_L - \sigma_L^+(1) | \le     \phi_e(L,-1) + \sum_{\rho \in S^1 \setminus\{-1,1\}} \phi_o(L,\rho).$$
\end{thm}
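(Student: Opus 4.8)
The plan is to follow the template already established in \S\ref{subsec.jump-1} and \S\ref{sec.cong}, but now tracking the precise \emph{sign} data of the diagonal entries rather than just their $f_\rho$-multiplicities. Fix a Seifert matrix $V$ and work with $W = (1-t)V + (1-t^{-1})V^\trans$, whose presented $\Lambda$-module is $N$. For each $\rho \in S^1 \setminus \{\pm 1\}$, Lemma~\ref{diag} lets me diagonalize $W$ over $\Lambda_{(f_\rho)}$ as $[\alpha_1 (f_\rho)^{\epsilon_1}, \ldots, \alpha_k (f_\rho)^{\epsilon_k}, 0, \ldots, 0]$ with the $\alpha_i$ units; the entries with $\epsilon_i$ odd contribute $\pm 2$ each to $\ju(\rho)$ (sign depending on $\mathrm{sign}(\alpha_i(\rho))$ relative to $\mathrm{sign}(\alpha_i(\omega))$ as $\omega$ passes $\rho$) while entries with $\epsilon_i$ even contribute $0$. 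Hence $|\ju(\rho)| \le 2\phi_o(L,\rho)$, and since the $2\phi_o(L,\rho)$ contributions of $\pm 2$ can only cancel in pairs, $\ju(\rho) \equiv 2\phi_o(L,\rho) \pmod 4$ — this is exactly the refinement of Theorem~\ref{cong}. Note $\phi_o(L,\rho)$ counts those $d_i$ (not just those nonzero diagonal entries of $W$) with $f_\rho$ appearing to an odd positive power; since $W = t^{-1}(1-t)(tV - V^\trans)$ and $1-t$ is a unit in $\Lambda_{(f_\rho)}$ for $\rho \ne 1$, the $\epsilon_i$ for $W$ agree with the $f_\rho$-exponents of the $d_i$, so the two descriptions match.

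For the statement at $\rho = -1$, I mimic \S\ref{subsec.jump-1}: pass to $W^* = (1-t^2)V + (1-t^{-2})V^\trans$, whose jumps at $\sqrt{-1}$ equal the jumps of $W$ at $-1$, and diagonalize $W^*$ over $\Lambda_{(f_{\sqrt{-1}})}$. The five types of nonzero entries listed there ($\alpha$-units, and $\beta,\gamma,\delta,\eta$ with odd or even exponent and definite sign at $\sqrt{-1}$) have counts $B, C, D, E$; conjugation symmetry forces $B = C$, and one checks $|\ju^\pm(W^*,\sqrt{-1})| \le |D - E|$ where $D + E$ is precisely the number of $d_i$ of $W$ whose $f_{-1}$-exponent, after the squaring substitution, is even and positive — that is, $D + E = \phi_e(L,-1)$. (Here I must be careful that the substitution $t \mapsto t^2$ exactly doubles the relevant exponent, as in \S\ref{evenp}, so an $f_{-1}$-exponent $m$ in $d_i$ becomes an even $f_{\sqrt{-1}}$-exponent in the corresponding entry of $W^*$; thus every $d_i$ with positive $f_{-1}$-exponent — regardless of parity of $m$ — contributes to the even-exponent bucket of $W^*$, and $D + E = \phi_e(L,-1)$.) Since $|D - E| \equiv D + E \pmod 2$, both inequalities follow: $|\ju^\pm(-1)| \le \phi_e(L,-1)$ and $|\ju^\pm(-1)| \equiv \phi_e(L,-1) \pmod 2$.

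Finally, the displayed global bound is assembled from the local ones. The total variation of the step function $\sigma_L(\omega)$ as $\omega$ runs once around $S^1$ starting just after $1$ and ending just before $1$ is $\sum_\omega \ju(\omega)$ taken over all discontinuities $\omega \ne 1$; by conjugation symmetry $\sigma_L(\bar\omega) = \sigma_L(\omega)$, and $\sigma_L(1) = 0$, so $\sigma_L - \sigma_L^+(1) = \ju^+(-1) + \sum_{\rho \ne \pm 1} \ju^-(\rho)$ after collapsing the conjugate-symmetric contributions on the upper half-circle (one argues as in the knot case that the value $\sigma_L$ at $-1$ is reached from $\sigma_L^+(1)$ by accumulating the jumps along the upper arc). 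Bounding the $-1$ term by $\phi_e(L,-1)$ and each $\rho \ne \pm 1$ term by $2\phi_o(L,\rho)$, then folding the factor of $2$ back into a sum over the full circle minus $\{\pm 1\}$ (each conjugate pair contributes $\phi_o + \phi_o = 2\phi_o$), gives $|\sigma_L - \sigma_L^+(1)| \le \phi_e(L,-1) + \sum_{\rho \in S^1 \setminus\{-1,1\}} \phi_o(L,\rho)$.

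\textbf{Main obstacle.} The routine parts — diagonalizing via Lemma~\ref{diag}, the $\pm 2$ versus $0$ contribution count, and the squaring trick at $-1$ — are all in place from earlier sections. The delicate point is the bookkeeping in the last paragraph: correctly relating the algebraic "total jump" $\sum \ju(\rho)$ to the geometric difference $\sigma_L - \sigma_L^+(1)$, and in particular justifying that the conjugation symmetry of $\sigma_L$ lets one replace $\sum_{\text{upper arc}} \ju^-(\rho)$ (which involves one endpoint-behaviour, $\ju^-$, of each $\rho$) by $\tfrac12 \sum_{\rho \ne \pm 1}\ju(\rho)$ up to the boundary term at $-1$, and then to pass from $|\tfrac12\ju(\rho)|$ bounds to $\phi_o(L,\rho)$ bounds without losing the factor of $2$. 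This is the same subtlety the authors flag in the introduction (the discontinuity at $-1$ being easy to mishandle), so I would write this step out carefully rather than gesturing at "the knot case."
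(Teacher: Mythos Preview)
Your overall strategy matches the paper's (which simply says the proof ``follows along the same lines as our earlier work''), and your treatment of $\rho \ne \pm 1$ is correct and is exactly what is intended.  The assembly of the global bound in your last paragraph is also essentially right; the identity you want is $\sigma_L - \sigma_L^+(1) = \ju^-(-1) + \sum_{\rho \text{ on the open upper arc}} \ju(\rho)$, after which the triangle inequality and conjugation symmetry $\phi_o(L,\bar\rho)=\phi_o(L,\rho)$ give the displayed inequality.

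There is, however, a genuine error in your argument at $\rho=-1$.  You assert that the substitution $t\mapsto t^2$ ``exactly doubles the relevant exponent,'' so that every $d_i$ with positive $(t{+}1)$--exponent lands in the even--exponent bucket of $W^*$.  This is false.  If $(t+1)^m$ exactly divides $d_i(t)$, then $(t^2+1)^m$ exactly divides $d_i(t^2)$, and since $t^2+1 \sim_\Lambda f_{\sqrt{-1}}$ while $1-t^2$ is a unit in $\Lambda_{(f_{\sqrt{-1}})}$, the $f_{\sqrt{-1}}$--exponent of the corresponding invariant factor of $W^*=W(t^2)$ is exactly $m$, not $2m$.  (Your citation of \S\ref{evenp} in fact supports this: there the conclusion is $e_{t-1}(W)=e_{t+t^{-1}}(W^{**})$, an equality, not a doubling.)  So the parity is \emph{preserved}: in the notation of \S\ref{subsec.jump-1} one has $B+C=\phi_o(L,-1)$ and $D+E=\phi_e(L,-1)$.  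Your stated conclusion $D+E=\phi_e(L,-1)$ is correct, but it does not follow from your reasoning---if every positive exponent really became even you would instead get $D+E=\phi_o(L,-1)+\phi_e(L,-1)$, which is the wrong bound.  Once ``doubling'' is replaced by ``preserving,'' the rest of your argument at $-1$ (namely $B=C$ by symmetry, hence $\ju^\pm(-1)=\pm(D-E)$, so $|\ju^\pm(-1)|=|D-E|\le D+E$ and $|D-E|\equiv D+E\pmod 2$) is correct.
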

                    

\section{Some examples} \label{ex} 

 In this section we will present 
some examples that  illustrate our results.
Notice that all the polynomials presented are in $\zz[t,t^{-1}]$ and are defined up to multiplication by a unit, $\pm t^i$.  When possible, we normalize   polynomials $f$ so that   $f \in \zz[t]$ and $f(0)\ne  0$.


\subsection{A family of links with zero Alexander polynomial}  

Consider the family of 3--component links $L_n$ illustrated in  Figure~\ref{figure}.

\begin{figure}[h]
\centerline{\includegraphics[width=2in]{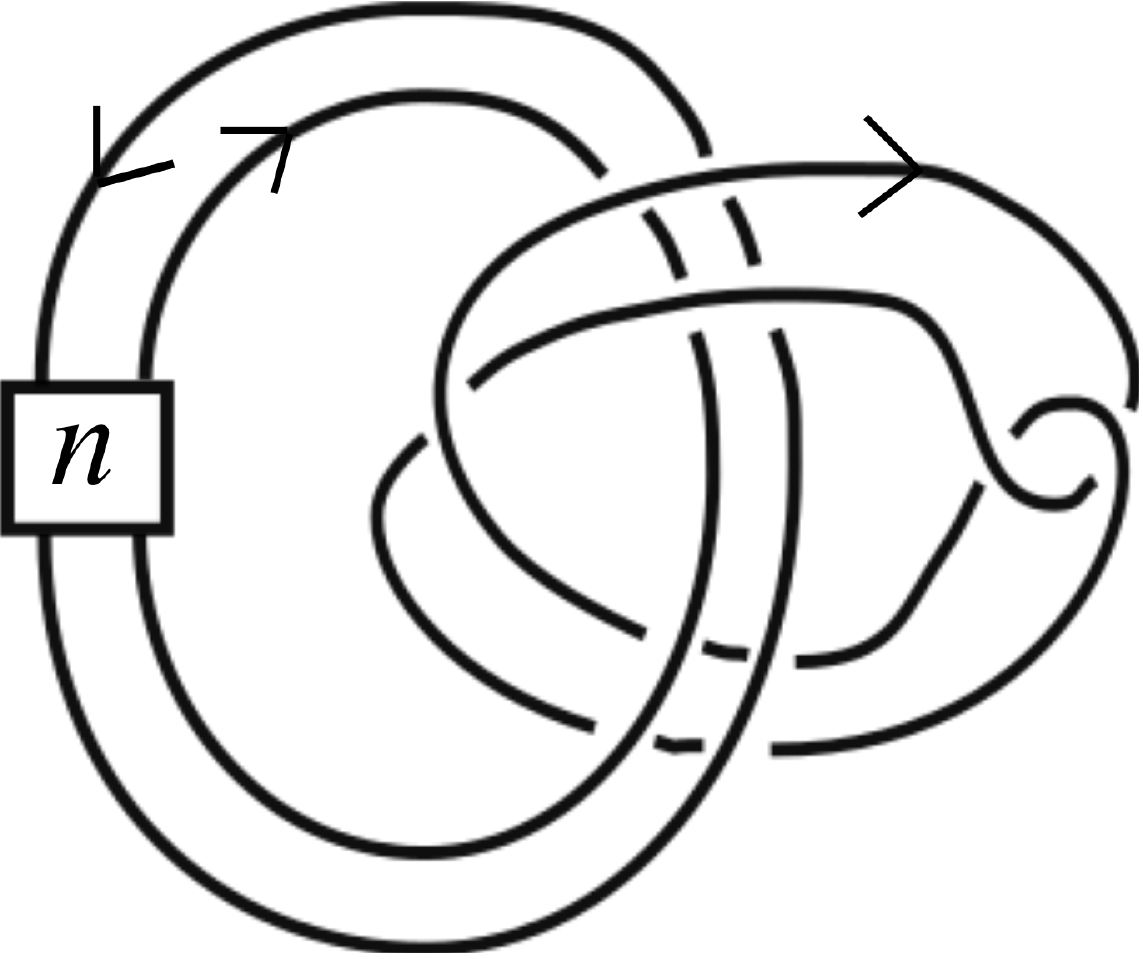}}
\caption{Link with $n$ denoting full twists.}
\label{figure}
\end{figure}

There is an obvious disconnected Seifert surface consisting of an annulus  and a punctured torus.
We tube these together to obtain a connected Seifert surface with Seifert matrix
$$V_n=\left( \begin{array}{cccc}
-1 & 1 & 0 & 0 \\
0 & -1 & 1 &0\\
0 & 1 & n &0\\
0 & 0& 0& 0 \end{array} \right).$$

We have that $\Delta_{L_n}=0$;
$A_{L_n}=\Delta_2(L_n)= 
(t-1)((n+1) t^2-(n+2) t+(n+1))$; 
$$\sigma_{L_n}= 
\begin{cases} -3 &\mbox{if } n \le -2 \\
-1 &\mbox{if } n \ge -1\
. \end{cases}$$

Thus $A_{L_0}=(t-1)^3$ and $\sigma_{L_0}=-1$.   As $A_{L_0}$ has no roots on $S^1\setminus \{1\}$, $\sigma_{L_0}$ is identically $-1$ on $S^1\setminus \{1\}$.

Similarly,  $A_{L_{-1}}=(t-1)$ and $\sigma(L_{-1})=-1$.   As $A_{L_{-1}}$ has no roots on $S^1\setminus \{1\}$, $\sigma_{L_{-1}}$ is identically $-1$ on $S^1\setminus \{1\}$.

For $n \notin \{0,-1\}$,  $A_{L_n}$ has a single conjugate pair of roots $\alpha_n$ and $ \bar \alpha_n$  on $S^1\setminus \{1\}$ with real part $\Re (\alpha_n)=\frac {n+2}{2(n+1)}$, where
$\sigma_{L_n}$ must make a total jump of $\pm2$. The one-sided jump at $1$ must have absolute value less than or equal to $2$. It follows that 
if $n \le -2$, then
$$\sigma_{L_n}(e^{\pi i x})= \begin{cases} 0 &\mbox{if } x=0 \\
-1 &\mbox{if } 0 <x < \frac 1 \pi \arccos( \frac {n+2} {2(n+1)} )\\
-2 &\mbox{if } x=\frac 1 \pi \arccos( \frac {n+2} {2(n+1)} )\\
-3 &\mbox{if } \frac 1 \pi \arccos( \frac {n+2} {2(n+1)} ) <x\le 1 .
 \end{cases}$$

If $n \ge 1$, then
$$\sigma_{L_n}(e^{\pi i x})= \begin{cases} 0 &\mbox{if } x=0 \\
1 &\mbox{if } 0 <x < \frac 1 \pi \arccos( \frac {n+2} {2(n+1)} )\\
0 &\mbox{if } x=\frac 1 \pi \arccos( \frac {n+2} {2(n+1)} )\\
-1 &\mbox{if } \frac 1 \pi \arccos( \frac {n+2} {2(n+1)} ) <x\le 1 .
 \end{cases}$$


\subsection{Low crossing links with $\Delta_L(-1) =0$}

In   Thistlethwaite's list of links with ten or fewer crossings, there are   16 links for which  $\Delta_L(-1)= 0$; these are
$L8n6$, $L8n8$, $L9n18$, $L9n19$, $L9n27$, $L10n32$, $L10n36$, $ L10n56$, $ L10n57$, $L10n59$, $L10n91$, $L10n93$, $L10n94$, $L10n104$, $L10n107$,  and $L10n111$.  
We will investigate the first three of these  links using the Seifert matrices given by LinkInfo~\cite{cha-livingston}. 
In LinkInfo,   links are equipped with specified orientations and all orientations, up to simultaneous reversal of all components, are considered. These orientations are described by a zero-one vector  of length $\mu-1$.


\subsubsection{$L8n6$}

We study the 3--component  link $L8n6$ with its different orientations.  

Consider first $L8n6 (0, 0)$.  One computes $\sigma_L(-1)=3$,  $A_L=\Delta_L =(t+1)^2(t-1)^2$. 
 Thus the signature function can only jump at $\pm1$.  It follows that the one-sided jump at 1 can be at most 2. Also, $|\ju^\pm(-1)| \le 1$.   The only possible signature function consistent with the above is given by
$$ \sigma_L(e^{\pi i x})= \begin{cases} 0 &\mbox{if } x=0 \\
2 &\mbox{if } 0 <x <  1\\
3 &\mbox{if } x=1  
. \end{cases}$$

Next, we consider $L8n6 (1, 0)$.  
 One computes $\sigma_L(-1)= -5$, $A_L= \Delta_L=(t-1)^2(t+1)^2 \left(t^2-t+1\right)$. The last factor is the cyclotomic polynomial with roots the primitive 6th roots of unity. Thus the signature function can only jump at these  6th roots of unity and at $\pm1$. The total jump at the  6th roots of unity must be $\pm2$. The one-sided jump at 1 can be at most 2. Also, $|\ju^\pm(-1)| \le 1$. The only possible signature function consistent with the above is given by
$$ \sigma_L(e^{\pi i x})= \begin{cases} 0 &\mbox{if } x=0 \\
-2 &\mbox{if } 0 <x <  \frac 1 3\\
-3 &\mbox{if } x=\frac 1 3\\
-4 &\mbox{if } \frac 1 3 <x<1  \\
-5 &\mbox{if } x=1  
. \end{cases}$$

Finally, consider $L8n6 (0, 1)$.  
 One computes $\sigma_L(-1)= -1$,  $A_L=\Delta_L= (t-1)^2 (t+1)^2$. Thus the signature function can   jump only at $\pm1$.  The one-sided jump at 1 can be at most 2. Also, $|\ju^\pm(-1)| \le 1$.  After calculating $\sigma_L(\i)= 0$, we conclude that
  $$ \sigma_L(e^{\pi i x})= \begin{cases} 
0 &\mbox{if } 0 \le x <  1\\
-1 &\mbox{if } x=1  
. \end{cases}$$

The   computations and reasoning for $L8n6 (0, 1)$   also hold  for $L8n6 (1, 1)$.


\subsubsection{$L8n8$}\label{L8n8}  
The link $L8n8(0,0,0)$ has  four components.  It is built by  replacing each component of a Hopf link with a pair of unlinked oppositely oriented components. It has a  disconnected Seifert surface composed of two annulli. One can easily work out, by hand, that $\Delta_L=0$, $A_L = \Delta_2(L) =(t-1)^2$, and $\sigma_L(\omega)=0$, for all $\omega$.  

The link $L8n8(1,0,0)$  is constructed from a   Hopf link by  replacing one component with  a pair of unlinked  components oriented in the same way and replacing the other component with a pair of unlinked oppositely oriented components.  The computation in this case are most easily carried out with computer assitance.  They yield the following:
$\Delta_L=0$, $A_L = \Delta_2(L) =(t-1)^2$, and $\sigma_L(-1)=0$. One then can conclude that $\sigma_L(\omega)=0$, for all $\omega$.

For $L8n8(1,0,1)$, which is obtained by taking a positive Hopf link and replacing each component by a pair of unlinked  components oriented in the same way, 
 $A_L=\Delta_L=(t-1)^3(t+1)^2$, $\sigma_L(-1)=-4$.  The only possible signature function consistent with the above is given by
$$ \sigma_L(e^{\pi i x})= \begin{cases} 0 &\mbox{if } x=0 \\
-3&\mbox{if } 0 <x < 1\\
-4 &\mbox{if } x=1  
. \end{cases}$$
The other orientations on $L8n8$ are obtained from the above three by symmetries (some reversing the ambient orientation), and so we do not consider these  other orientations.


\subsubsection{$L9n18$}

 Consider the 2--component link $L9n18 (0)$. 
 One computes  from the Seifert matrix that $A_L=  \Delta_L=(t-1) (t+1)^2 \left(t^2-t+1\right)^2$ and $\sigma_L= 6$. The last factor is the square of the cyclotomic polynomial with roots the primitive 6th roots of unity. Thus the signature function can only jump at these  6th roots of unity and at $\pm1$. The total jump at the  6th roots of unity are either  zero, or $\pm4$. The one-sided jump at 1 can be at most 1. Also, $|\ju^\pm(-1)| \le 1$. The only possible signature function consistent with the above is given by
$$ \sigma_L(e^{\pi i x})= \begin{cases} 0 &\mbox{if } x=0 \\
1 &\mbox{if } 0 <x <  \frac 1 3\\
3 &\mbox{if } x=\frac 1 3\\
5 &\mbox{if } \frac 1 3 <x<1  \\
6 &\mbox{if } x=1  
. \end{cases}$$

 Consider $L9n18 (1)$.
One computes $\sigma_L= -2$ and $A_L(t) =\Delta_L= (t-1) (t+1)^2$. Thus the signature function can only jump  at $\pm1$. The one-sided jump at 1 can be at most 1. Also, $|\ju^\pm(-1)| \le 1$. The signature function consistent with the above is given by
$$ \sigma_L(e^{\pi i x})= \begin{cases} 0 &\mbox{if } x=0 \\
-1 &\mbox{if } 0 <x <  1\\
-2 &\mbox{if } x=1  
. \end{cases}$$


\subsection{A  3--component link with $\Delta_L =(t-1)^2 (t+1)^2$ and zero signature function}\label{counter}

Consider the  Seifert matrix 
\[ V =\left( \begin{array}{cccc}
1 & -1 & 1 & 1 \\
0 & 0 &0 &-1\\
1 & 0 & 0 &2\\
1 & -1& 2 & 0 \end{array} \right).\]
Pick a 3--component link $L$ with this Seifert matrix. One computes $\Delta_L =(t-1)^2 (t+1)^2$ and $\sigma_L(\i)=\sigma_L(-1)=0.$ Since $\sigma_L(\omega)$ can only jump at $\pm 1$, we see that $\sigma_L(\omega)$ is identically zero.  

This example demonstrates  that  Lemma~\ref{diag} cannot be extended to the case $\rho=-1$. Suppose $W$ can diagonalized as a Hermitian form over $\Lambda_{(t+1)}$. Then $W$ diagonalized must have all entries self-conjugate. Thus, any entry divisible by $1+t$ must be divisible by an even power  of $t+1$.  It follows that  exactly one entry can be divisible by $1+t$, and this entry must have exponent two. However, this would imply that  $\sigma_L(\omega)$ has  a one-sided jump at $-1$, yielding the desired contradiction.



\begin{thebibliography}{BVZ}

\bibitem{cha-livingston} J.~C.~Cha and C.~Livingston, {\em LinkInfo: Table of Link Invariants}, \url{http://www.indiana.edu/~linkinfo}, August 10, 2015.


\bibitem{garoufalidis}  S.~Garoufalidis, {\em Does the Jones polynomial determine the signature of a  knot?}, \url{arXiv:math/03102036}.
 
\bibitem{kearney} M.~K.~Kearney, {\em The stable concordance genus}, New York Journal of Math. {\bf 20} (2014), 973--987.

\bibitem{levine} J. Levine, {\em  Invariants of knot cobordism}, 
  Invent.~Math.~{\bf 8} (1969), 98--110.

\bibitem{liechti}   L.~Liechti, {\em Signature, positive Hopf plumbing and the Coxeter transformation}, Osaka Journal of Mathematics (to appear) , \url{arXiv:1401.5336}.


\bibitem {lickorish} W.~B.~R.~Lickorish, An Introduction to Knot Theory, 
Graduate Texts in Mathematics, {\bf 175}, Springer, New York, 1997.

\bibitem {matumoto} T.~Matumoto, {\em On the signature invariants of a non-singular complex sesquilinear form}, J.~Math.~Soc.~Japan, {\bf 29} (1977), 67--71.

\bibitem{milnor}  J.~Milnor, {\em Infinite Cyclic Coverings},
  Topology of Manifolds, Complementary Series in Mathematics vol. 13, ed.~J.~G.~Hocking, Prindle, Weber \& Schmidt, Boston, 1968.

\bibitem{murasugi} K.~Murasugi, {\em On a certain numerical invariant of link types},  
  Trans.~Amer.~Math.~Soc.~{\bf 117}  (1965), 387--422.

\bibitem {stoimenow} A.~Stoimenow, {\em Application of braiding sequences III: Concordance of positive knots}, Int.~Jour.~of Math.~{\bf 26} 
(2015), 
1550050 1--36.

\bibitem{tristram} A.~Tristram, {\em Some cobordism invariants for links}, 
  Proc.~Camb.~Phil.~Soc.~{\bf 66} (1969), 251--264.

\end{thebibliography}
\end{document}